\documentclass[11pt]{amsart}

\usepackage{amsmath,amssymb}

\newtheorem{theorem}{Theorem}[section]
\newtheorem{lemma}[theorem]{Lemma}
\newtheorem{proposition}[theorem]{Proposition}
\newtheorem{corollary}[theorem]{Corollary}

\theoremstyle{definition}

\theoremstyle{remark}

\title[A truncated singular-value bound for spectral variation]
{A Truncated Singular-Value Bound for Spectral Variation of Normal Matrices}
\author{Qiyue Tang}

\address{Zhili College, Tsinghua University, Beijing, 100084, China}
\email{tangqy24@mails.tsinghua.edu.cn}

\subjclass[2020]{15A18, 15A42, 15A60}
\keywords{spectral variation, normal matrices, optimal matching distance,
singular values, Ky Fan $(2,r)$-norm, Hall's marriage theorem}

\newcommand{\diag}{\operatorname{diag}}
\newcommand{\C}{\mathbb{C}}
\newcommand{\KFnorm}[2]{\lVert #1\rVert_{(2,#2)}}

\begin{document}

\begin{abstract}
Let $A,B\in M_n(\C)$ be normal and let
$d(\sigma(A),\sigma(B))$ denote the optimal matching distance
between their spectra.  We prove that
\[
d(\sigma(A),\sigma(B))^2
\le
\sum_{k=1}^{\lfloor(n+1)/2\rfloor}s_k(A-B)^2,
\]
where $s_1(M)\ge\cdots\ge s_n(M)\ge0$ are the singular values of $M$.
Consequently,
\[
c_n\le \sqrt{\left\lfloor\frac{n+1}{2}\right\rfloor},
\]
improving the classical dimension-free bound $c_n\le c_*<2.9038872828$
for $3\le n\le16$;
in particular, $c_3\le\sqrt2$.  The truncation length is optimal for
$n=3$ and $n=4$, although the sharp values of the fixed-dimensional
constants remain unknown.  It also strengthens the Hoffman--Wielandt
bound by replacing the full Frobenius norm with a truncated
singular-value norm.  The proof
combines Hall's marriage theorem with intersecting spectral subspaces and
singular-value monotonicity under rectangular compressions.
\end{abstract}

\maketitle

\section{Introduction}

Let $A,B\in M_n(\C)$ be normal matrices, with spectra
\[
\sigma(A)=\{\alpha_1,\ldots,\alpha_n\},\qquad
\sigma(B)=\{\beta_1,\ldots,\beta_n\},
\]
where eigenvalues are counted with algebraic multiplicity.  Their
\emph{optimal matching distance} is
\[
d(\sigma(A),\sigma(B))
=\min_{\tau\in S_n}\max_{1\le i\le n}
  |\alpha_i-\beta_{\tau(i)}|.
\]
Throughout, $\lVert\cdot\rVert$ denotes the spectral norm,
$\lVert\cdot\rVert_F$ the Frobenius norm, and $M^*$ the conjugate
transpose (Hermitian adjoint) of $M$.  For a matrix $M$, let
\[
s_1(M)\ge s_2(M)\ge\cdots\ge s_n(M)\ge0
\]
be its singular values.  Thus $s_1(M)=\lVert M\rVert$ and
\[
\sum_{k=1}^n s_k(M)^2=\lVert M\rVert_F^2.
\]
For $1\le r\le n$, write
\[
\KFnorm{M}{r}:=
\left(\sum_{k=1}^r s_k(M)^2\right)^{1/2}
\]
for the Ky Fan $(2,r)$-norm, a unitarily invariant norm satisfying
$\KFnorm{M}{1}=\lVert M\rVert$ and $\KFnorm{M}{n}=\lVert M\rVert_F$.

The Hoffman--Wielandt theorem \cite{HoffmanWielandt1953} states that for
normal matrices there is a permutation $\tau\in S_n$ such that
\[
\sum_{i=1}^n|\alpha_i-\beta_{\tau(i)}|^2
\le \lVert A-B\rVert_F^2.
\]
It follows that
\[
d(\sigma(A),\sigma(B))\le \lVert A-B\rVert_F
\le\sqrt n\,\lVert A-B\rVert.
\]
Mirsky \cite{Mirsky1960} conjectured the stronger operator-norm estimate
\[
d(\sigma(A),\sigma(B))\le\lVert A-B\rVert
\]
for all normal $A$ and $B$.  Holbrook \cite{Holbrook1992} disproved this conjecture with a
computer-assisted $3\times3$ example. An explicit,
elementary example, due to G.~Krause, is given in
\cite[Sec.~33]{Bhatia2007Book}.  For this example,
\[
d(\sigma(A),\sigma(B))^2=\frac{28}{13},
\qquad
\lVert A-B\rVert^2=\frac{27}{13},
\]
and hence
\[
\sqrt{\frac{28}{27}}\le c_3.
\]

For $n\ge1$, define
\[
c_n=
\sup_{\substack{A,B\in M_n(\C)\text{ normal}\\A\ne B}}
\frac{d(\sigma(A),\sigma(B))}{\lVert A-B\rVert},
\qquad
c_*:=\sup_{n\ge1}c_n.
\]
Bhatia, Davis, and McIntosh \cite{BhatiaDavisMcIntosh1983} proved the
existence of a finite dimension-free upper bound.  Bhatia, Davis, and
Koosis \cite{BhatiaDavisKoosis1989} reduced the problem to an extremal
problem in Fourier analysis; combined with the numerical estimates of
H\"ormander and Bernhardsson \cite{HormanderBernhardsson1993}, this yields
\[
c_*<2.9038872828.
\]
Recently, Bondarenko, Ortega-Cerd\`a, Radchenko, and Seip
\cite{BondarenkoEtAl2026} characterized the H\"ormander--Bernhardsson
extremal function and developed high-precision methods for computing it.
These works, however, concern the auxiliary Fourier extremal problem and
leave $c_*$ undetermined.  Thus
\[
\sqrt{\frac{28}{27}}\le c_*<2.9038872828,
\]
and the exact values of both $c_*$ and $c_3$ remain open.  We stress that
$c_3$ here denotes the fixed-dimensional matrix constant, not an auxiliary
constant from the Fourier-extremal literature.

For background and the history of the problem, see
\cite{Bhatia2007Book,Bhatia2007History}.  The spectral-subspace mechanism
used below goes back to Bhatia, Davis, and McIntosh
\cite{BhatiaDavisMcIntosh1983}; see also Bhatia and Holbrook
\cite{BhatiaHolbrook1987}.  The proof combines a Hall-duality witness $(I,J)$ for the optimal matching distance with a rectangular singular-value compression.

Set
\[
m_n:=\left\lfloor\frac{n+1}{2}\right\rfloor.
\]
The main result is as follows.

\begin{theorem}\label{thm:main}
For any two normal matrices $A,B\in M_n(\C)$,
\begin{equation}\label{eq:main}
d(\sigma(A),\sigma(B))^2
\le \sum_{k=1}^{m_n}s_k(A-B)^2.
\end{equation}
Equivalently,
\begin{equation}\label{eq:main-kf}
d(\sigma(A),\sigma(B))
\le \KFnorm{A-B}{m_n}.
\end{equation}
In particular,
\begin{equation}\label{eq:main-op}
d(\sigma(A),\sigma(B))
\le \sqrt{m_n}\,\lVert A-B\rVert.
\end{equation}
\end{theorem}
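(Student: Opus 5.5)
Set $\delta:=d(\sigma(A),\sigma(B))$ and suppose $\delta>0$. The plan has three steps: extract a Hall-type witness from the fact that no matching of threshold less than $\delta$ exists, turn that witness into a pair of spectral subspaces with a large intersection, and then bound $\delta$ through singular values of a rectangular compression of $A-B$.

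\emph{Step 1: the Hall witness.} Fix $0<\varepsilon<\delta$ and form the bipartite graph on $\{1,\dots,n\}\times\{1,\dots,n\}$ that joins $i$ to $j$ exactly when $|\alpha_i-\beta_j|\le\delta-\varepsilon$. This graph has no perfect matching. By Hall's marriage theorem there is therefore a set $I$ of $A$-indices whose neighbourhood $N(I)$ satisfies $|N(I)|<|I|$. Put $J:=N(I)^c$. Then $|I|+|J|\ge n+1$, and $|\alpha_i-\beta_j|>\delta-\varepsilon$ for all $i\in I$ and $j\in J$.

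\emph{Step 2: intersecting subspaces.} Let $P$ be the spectral projection of $A$ onto the eigenvectors indexed by $I$, and $Q$ the spectral projection of $B$ onto those indexed by $J$. Because $\dim\operatorname{ran}P+\dim\operatorname{ran}Q\ge n+1$, the subspace $W:=\operatorname{ran}P\cap\operatorname{ran}Q$ has dimension at least $|I|+|J|-n\ge1$. To reach $m_n$, I would also use symmetry: swapping the roles of $A$ and $B$ lets us assume $|I|\le|J|$, or pick whichever side is smaller. Then the smaller of the two sets has size $p\le m_n$.

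\emph{Step 3: the Sylvester operator and compression.} Consider the compression $X:=Q(A-B)P$ viewed as a rectangular map $\operatorname{ran}P\to\operatorname{ran}Q$. Since $AP=PA$ and $QB=BQ$, we have $X=QAP-QBP$. This $X$ is exactly the Sylvester operator applied to $Y:=QP$, namely $X=YA_I-B_JY$, where $A_I$ and $B_J$ are the normal restrictions whose spectra are separated by more than $\delta-\varepsilon$. Its rank is at most $p:=\min(|I|,|J|)$. Singular values do not increase under compression, so $s_k(X)\le s_k(A-B)$ for every $k$, which gives
\[
\|X\|_F^2\le\sum_{k\le p}s_k(A-B)^2.
\]
For the lower bound, write $Y=QP$ in the eigenbases of $A_I$ and $B_J$. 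The Frobenius form of the Sylvester estimate for normal operators, which is entrywise, gives
\[
\|X\|_F^2=\sum_{i,j}|\alpha_i-\beta_j|^2|Y_{ji}|^2\ge(\delta-\varepsilon)^2\|Y\|_F^2 .
\]
Since $QP$ restricted to $W$ is the identity, $\|Y\|_F^2\ge\dim W\ge1$. Combining the two bounds and letting $\varepsilon\to0$ yields
\[
\delta^2\le\sum_{k\le p}s_k(A-B)^2 .
\]

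\emph{Main obstacle.} Step 3 bounds $\delta^2$ by the top $p$ terms with $p=\min(|I|,|J|)$, and $p$ can be as large as about $n$. The hard part is ensuring $p\le m_n$. I would shrink the witness: keep $|I|+|J|=n+1$ exactly, which is possible by discarding elements while preserving the separation. With that normalisation, $\min(|I|,|J|)\le\lfloor (n+1)/2\rfloor=m_n$. Since the truncated sum is monotone in $p$, this gives \eqref{eq:main}. \eqref{eq:main-kf} is the same statement written as a norm, and \eqref{eq:main-op} follows from $s_k\le s_1$.
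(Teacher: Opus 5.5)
Your proposal is correct and follows essentially the same route as the paper. You trim a Hall witness to $|I|+|J|=n+1$ and take a unit vector in the intersection of the two eigen-subspaces, so that $\|QP\|_F\ge1$. The entrywise Sylvester identity then gives $\|Q(A-B)P\|_F^2\ge\delta^2\|QP\|_F^2$, and singular-value monotonicity under compression with $\min(|I|,|J|)\le m_n$ finishes the bound. Your projection formulation is just the adjoint of the paper's $U_I^*MV_J$, and your $\varepsilon\to0$ limit replaces the paper's use of a maximizing pair from Lemma~\ref{lem:hall}.

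The symmetry remark in Step~2 is unnecessary and does not by itself give $p\le m_n$. The trimming in your final paragraph does that work, exactly as in the paper.
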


Consequently,
\begin{equation}\label{eq:cn}
c_n\le
\min\left\{\sqrt{\left\lfloor\frac{n+1}{2}\right\rfloor},\,2.9038872828\right\}.
\end{equation}
For $3\le n\le16$, the square-root term is smaller than the classical
dimension-free upper bound.  In particular,
\[
\sqrt{\frac{28}{27}}\le c_3\le\sqrt2.
\]
For $n=2$, Theorem~\ref{thm:main} recovers the classical equality $c_2=1$.
Because $\sqrt{m_n}$ grows with $n$, the theorem gives no new
dimension-free upper bound for $c_*$; for $n\ge17$, the classical bound in
\eqref{eq:cn} is smaller.

The coefficient $1$ in \eqref{eq:main} is sharp in every dimension.  Indeed,
for $t>0$, $A=\diag(t,0,\ldots,0)$, and $B=0$, one has
\[
d(\sigma(A),\sigma(B))^2=t^2
=\sum_{k=1}^{m_n}s_k(A-B)^2.
\]
Since
\[
\sum_{k=1}^{m_n}s_k(A-B)^2
\le\sum_{k=1}^{n}s_k(A-B)^2
=\lVert A-B\rVert_F^2,
\]
Theorem~\ref{thm:main} refines the Hoffman--Wielandt bound for the
optimal matching distance and also yields a rank-sensitive estimate.

\begin{corollary}\label{cor:rank}
For normal $A,B\in M_n(\C)$,
\[
d(\sigma(A),\sigma(B))
\le
\sqrt{\min\{\operatorname{rank}(A-B),m_n\}}\,
\lVert A-B\rVert.
\]
In particular, if $A-B$ has rank at most one, then
\[
d(\sigma(A),\sigma(B))\le\lVert A-B\rVert.
\]
\end{corollary}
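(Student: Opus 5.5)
The corollary follows directly from Theorem~\ref{thm:main}: in \eqref{eq:main}, I will bound the truncated sum $\sum_{k=1}^{m_n}s_k(A-B)^2$ using two facts. The first is how many singular values of $A-B$ can be nonzero. The second is that each singular value is at most the spectral norm.

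Set $M=A-B$ and $r=\operatorname{rank}(M)$. The number of nonzero singular values of $M$ equals its rank, so $s_k(M)=0$ for all $k>r$. The sum therefore truncates further:
\[
\sum_{k=1}^{m_n}s_k(M)^2=\sum_{k=1}^{\min\{r,m_n\}}s_k(M)^2 .
\]
Each remaining term satisfies $s_k(M)\le s_1(M)=\lVert M\rVert$. Hence the right-hand side is at most $\min\{r,m_n\}\,\lVert M\rVert^2$.

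Combining this with \eqref{eq:main} gives
\[
d(\sigma(A),\sigma(B))^2\le \min\{\operatorname{rank}(A-B),m_n\}\,\lVert A-B\rVert^2 .
\]
Taking square roots yields the stated inequality. The degenerate case $r=0$ means $A=B$; then both sides vanish, so it is covered automatically.

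For the rank-one statement, note that $\operatorname{rank}(A-B)\le1$ gives $\min\{\operatorname{rank}(A-B),m_n\}\le1$. The bound then reads $d(\sigma(A),\sigma(B))\le\lVert A-B\rVert$.

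There is no real obstacle here. The only point to state explicitly is that the rank equals the number of nonzero singular values, which follows from the singular value decomposition.
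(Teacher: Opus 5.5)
Your proposal is correct and matches the paper's proof: since only $\operatorname{rank}(A-B)$ singular values of $A-B$ are nonzero and each is at most $\lVert A-B\rVert$, the truncated sum in \eqref{eq:main} is at most $\min\{\operatorname{rank}(A-B),m_n\}\,\lVert A-B\rVert^2$. Your separate treatment of the degenerate case $A=B$ is harmless but not needed.
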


\begin{proof}
Only $\operatorname{rank}(A-B)$ singular values of $A-B$ are nonzero, and
each is at most $\lVert A-B\rVert$.  The assertion follows from
Theorem~\ref{thm:main}.
\end{proof}

The next observation shows that at least two singular values are necessary
in every dimension $n\ge3$.  It also settles the truncation length in the two
smallest nontrivial dimensions.

\begin{proposition}\label{prop:two-terms}
For every $n\ge3$, there exist normal matrices $A,B\in M_n(\C)$ such that
\[
d(\sigma(A),\sigma(B))>s_1(A-B)=\lVert A-B\rVert.
\]
Consequently, among inequalities with coefficient $1$ of the form
\[
d(\sigma(A),\sigma(B))^2
\le\sum_{k=1}^{r_n}s_k(A-B)^2,
\]
one must have $r_n\ge2$ for every $n\ge3$.  In particular, the truncation
length $m_n$ in Theorem~\ref{thm:main} is optimal for $n=3$ and $n=4$.
\end{proposition}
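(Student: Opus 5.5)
The plan is to start from the Krause example quoted above from \cite[Sec.~33]{Bhatia2007Book}. It supplies normal $A_0,B_0\in M_3(\C)$ with $d(\sigma(A_0),\sigma(B_0))^2=28/13>27/13=\lVert A_0-B_0\rVert^2$. This settles $n=3$ directly. For $n\ge4$ we pad by a direct sum. Put
\[
A=A_0\oplus 0_{n-3},\qquad B=B_0\oplus 0_{n-3}.
\]
Both are normal, and $A-B=(A_0-B_0)\oplus 0$, so $\lVert A-B\rVert=\lVert A_0-B_0\rVert$. The spectra become $\sigma(A_0)\cup\{0^{(n-3)}\}$ and $\sigma(B_0)\cup\{0^{(n-3)}\}$.

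The key step, and the main obstacle, is that padding by common eigenvalues could in principle \emph{lower} the optimal matching distance: a permutation might match some eigenvalue of $A_0$ to one of the added zeros. To rule this out, I would instead pad with a common eigenvalue $\lambda$ placed far away, say $A=A_0\oplus\lambda I_{n-3}$ and $B=B_0\oplus\lambda I_{n-3}$ with $|\lambda|$ large. The difference is unchanged, $A-B=(A_0-B_0)\oplus0$.

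Now take any $\tau\in S_n$. If it matches some eigenvalue of $A_0$ to a copy of $\lambda$, the cost is at least $|\lambda|-\max(\rho(A_0),\rho(B_0))$, which exceeds $\sqrt{28/13}$ once $|\lambda|$ is large. If $\tau$ matches no eigenvalue of $A_0$ to a copy of $\lambda$, then by counting it must send the three eigenvalues of $A_0$ bijectively onto those of $B_0$. Its cost is then at least $d(\sigma(A_0),\sigma(B_0))$. Hence
\[
d(\sigma(A),\sigma(B))\ge\min\bigl\{\sqrt{28/13},\,|\lambda|-C\bigr\}=\sqrt{28/13}>\sqrt{27/13}=\lVert A-B\rVert .
\]
(Equality in fact holds by matching the $\lambda$'s to each other, but only the lower bound is needed.)

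The consequences then follow. Any valid inequality with $r_n=1$ would read $d\le s_1(A-B)$, which the examples contradict, so $r_n\ge2$. For $n=3,4$ we have $m_n=2$, so Theorem~\ref{thm:main} shows $r_n=2$ is attained. Hence $m_n$ is optimal there.

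If one wants to avoid citing the Krause example, one could verify its numbers explicitly. The paper states them, so I would take them as given.
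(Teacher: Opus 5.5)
Your proposal is correct and follows the paper's route: pad a $3\times3$ counterexample by a direct sum with common eigenvalues far from $\sigma(A_0)\cup\sigma(B_0)$, which leaves $\lVert A-B\rVert$ unchanged while the matching distance cannot drop (using Krause's example rather than Holbrook's makes no difference). Your version is slightly leaner: a single repeated $\lambda$ suffices, and you prove only the needed lower bound $d(\sigma(A),\sigma(B))\ge d(\sigma(A_0),\sigma(B_0))$, whereas the paper takes mutually distant $\gamma_j$ and proves equality, neither of which is necessary.
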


\begin{proof}
Let $(A_0,B_0)$ be a Holbrook example in $M_3(\C)$
\cite{Holbrook1992}, and put
\[
d_0:=d(\sigma(A_0),\sigma(B_0))>\lVert A_0-B_0\rVert.
\]
For $n>3$, choose $\gamma_4,\ldots,\gamma_n\in\C$ mutually distant
and far from $\sigma(A_0)\cup\sigma(B_0)$, so that
\[
|\gamma_j-\gamma_k|>d_0\quad(j\ne k),
\qquad
|\gamma_j-z|>d_0
\quad
(z\in\sigma(A_0)\cup\sigma(B_0)).
\]
Set
\[
A=A_0\oplus\diag(\gamma_4,\ldots,\gamma_n),\qquad
B=B_0\oplus\diag(\gamma_4,\ldots,\gamma_n).
\]
The matching that pairs each $\gamma_j$ with itself and restricts to
an optimal matching of $\sigma(A_0),\sigma(B_0)$ has maximum distance
$d_0$, so $d(\sigma(A),\sigma(B))\le d_0$.  Conversely, in any matching
of maximum distance strictly less than $d_0$, every $\gamma_j$ must be
paired with itself, since all other possible partners are at distance
greater than $d_0$ from it.  Restricting such a matching to the
remaining three eigenvalues then yields a matching between
$\sigma(A_0)$ and $\sigma(B_0)$ of maximum distance strictly less than
$d_0$, which contradicts the definition of $d_0$.  Hence
\[
d(\sigma(A),\sigma(B))=d_0,
\qquad
\lVert A-B\rVert=\lVert A_0-B_0\rVert.
\]
For $n=3$ we simply take $(A,B)=(A_0,B_0)$.  Since $m_3=m_4=2$ and at
least two terms are necessary, the truncation length in
Theorem~\ref{thm:main} is optimal for $n=3,4$.
\end{proof}

It remains to prove Theorem~\ref{thm:main}.  The proof rests on the
following duality for the optimal matching distance.  To the best of
our knowledge, neither \eqref{eq:main} nor its consequence
$c_3\le\sqrt2$ has appeared previously.

\medskip
\noindent\textbf{Use of AI.}
GPT-5.6 sol was used to obtain the main results of this paper. The author
subsequently reviewed the entire manuscript in full and takes full
responsibility for its correctness.

\section{Proof of the main theorem}\label{sec:proof}

\begin{lemma}[Hall duality]\label{lem:hall}
For indexed multisets
$\mathcal A=\{\alpha_1,\ldots,\alpha_n\}$ and
$\mathcal B=\{\beta_1,\ldots,\beta_n\}$ in $\C$, let
\[
d(\mathcal A,\mathcal B)
=\min_{\tau\in S_n}\max_{1\le i\le n}
  |\alpha_i-\beta_{\tau(i)}|
\]
be their optimal matching distance.  Then
\begin{equation}\label{eq:hall}
d(\mathcal A,\mathcal B)
=
\max_{\substack{I,J\subseteq\{1,\ldots,n\}\\|I|+|J|=n+1}}
\min_{\substack{i\in I\\j\in J}}|\alpha_i-\beta_j|.
\end{equation}
\end{lemma}

\begin{proof}
Fix $I,J$ with $|I|+|J|=n+1$ and let $\tau\in S_n$.  Since
$|J^c|=n-|J|=|I|-1$, the image $\tau(I)$, which has $|I|$ elements,
cannot be contained in $J^c$, which has only $|I|-1$ elements.  Hence
some $i\in I$ satisfies $\tau(i)\in J$, and for this $i$
\[
\max_{1\le i\le n}|\alpha_i-\beta_{\tau(i)}|
\ge|\alpha_i-\beta_{\tau(i)}|
\ge\min_{\substack{i\in I\\j\in J}}|\alpha_i-\beta_j|.
\]
The rightmost quantity is independent of $\tau$, so minimizing the
leftmost term over $\tau$ yields
$d(\mathcal A,\mathcal B)\ge\min_{i\in I,j\in J}|\alpha_i-\beta_j|$.
Maximizing over all admissible pairs $I,J$ proves the ``$\ge$''
direction of \eqref{eq:hall}.

For the reverse inequality, put $\delta=d(\mathcal A,\mathcal B)$.  If
$\delta=0$, there is nothing to prove.  Suppose that $\delta>0$, fix
$0\le r<\delta$, and form the bipartite graph in which $\alpha_i$ is joined
to $\beta_j$ precisely when $|\alpha_i-\beta_j|\le r$.  A
perfect matching in this graph would give a matching of
$\mathcal A,\mathcal B$ with maximum distance at most $r<\delta$,
contradicting the definition of $\delta$; hence no perfect matching
exists.  By Hall's marriage theorem \cite{Hall1935}, there is a subset
$I\subseteq\{1,\ldots,n\}$ whose neighborhood $N_r(I)$
satisfies $|N_r(I)|<|I|$.  Put
$J=\{1,\ldots,n\}\setminus N_r(I)$.  Then
\[
|I|+|J|=|I|+n-|N_r(I)|\ge n+1,
\]
and $|\alpha_i-\beta_j|>r$ for every $i\in I$ and $j\in J$.  Passing to
subsets of $I$ and $J$ if necessary, we may assume $|I|+|J|=n+1$, and
the cross distances remain greater than $r$.  Consequently the
right-hand side of \eqref{eq:hall} is at least $r$.  Since this holds
for every $r<\delta$, it is at least $\delta=d(\mathcal A,\mathcal B)$.
This proves the reverse inequality, and the lemma follows.
\end{proof}

\begin{proof}[Proof of Theorem~\ref{thm:main}]
Write
\[
\delta=d(\sigma(A),\sigma(B)),\qquad M=A-B.
\]
Choose $I,J$ attaining the maximum in \eqref{eq:hall}, and put
$p=|I|$, $q=|J|$.  Then $p+q=n+1$, and
\begin{equation}\label{eq:cross-distance}
|\alpha_i-\beta_j|\ge\delta
\qquad(i\in I,\ j\in J).
\end{equation}

Fix complete orthonormal eigenbases of $A$ and $B$ once and for all
(repeated eigenvalues cause no difficulty), and let $U_I\in\C^{n\times p}$
and $V_J\in\C^{n\times q}$ be the isometries whose columns are the
eigenvectors labeled by $I$ and $J$, respectively. The column spaces of $U_I$ and $V_J$ have dimensions $p$ and $q$,
respectively, and $p+q=n+1>n$; hence they cannot be disjoint.  Choose a unit vector $z$ in
the intersection and write
\[
z=U_Ia=V_Jb;
\]
then $a=U_I^*z$ and $b=V_J^*z$ are unit vectors in $\C^p$ and $\C^q$.  For
$X:=U_I^*V_J\in\C^{p\times q}$, we then have $Xb=a$.  Since $U_I$ and
$V_J$ are isometries, $\lVert X\rVert\le1$.  On the other hand, since $\lVert b\rVert=1$,
$\lVert X\rVert\ge\lVert Xb\rVert=\lVert a\rVert=1$.  Hence
\begin{equation}\label{eq:X}
s_1(X)=\lVert X\rVert=1,
\qquad
\lVert X\rVert_F^2\ge1.
\end{equation}
Let
\[
D_I=\diag(\alpha_i)_{i\in I},\qquad
D_J=\diag(\beta_j)_{j\in J}.
\]
Normality gives $U_I^*A=D_IU_I^*$ and $BV_J=V_JD_J$.  Consequently,
\begin{equation}\label{eq:sylvester}
C:=D_IX-XD_J=U_I^*(A-B)V_J=U_I^*MV_J.
\end{equation}
Entrywise,
\[
C_{ij}=(\alpha_i-\beta_j)X_{ij}.
\]
It follows from \eqref{eq:cross-distance} and \eqref{eq:X} that
\begin{equation}\label{eq:C}
\begin{split}
\lVert C\rVert_F^2
&=\sum_{\substack{i\in I\\j\in J}}
  |\alpha_i-\beta_j|^2|X_{ij}|^2 \\
&\ge\delta^2\lVert X\rVert_F^2
\ge\delta^2.
\end{split}
\end{equation}
Let $r_0=\min(p,q)$.  The min--max characterization of singular values, or
equivalently the standard product inequalities for singular values, gives
\begin{equation}\label{eq:compression}
s_k(U_I^*MV_J)\le s_k(M),
\qquad 1\le k\le r_0;
\end{equation}
see, for example, \cite[Chapter~III, Section~5]{Bhatia1997}.
Therefore
\begin{equation}\label{eq:S}
\lVert C\rVert_F^2
=\sum_{k=1}^{r_0}s_k(C)^2
\le\sum_{k=1}^{r_0}s_k(M)^2
\le\sum_{k=1}^{m_n}s_k(M)^2,
\end{equation}
because $p+q=n+1$ implies
\[
r_0=\min(p,q)\le\left\lfloor\frac{n+1}{2}\right\rfloor=m_n.
\]
Combining \eqref{eq:C} and \eqref{eq:S} proves \eqref{eq:main}.  Finally,
$s_k(M)\le s_1(M)=\lVert M\rVert$ gives \eqref{eq:main-op}.
\end{proof}

\section{Open problems}

The preceding results leave the following questions open.

\begin{enumerate}
\item Determine the exact value of the universal sharp constant $c_*$.
  The currently available bounds recalled above are
  \[
  \sqrt{\frac{28}{27}}\le c_*<2.9038872828.
  \]

\item Determine the exact value of $c_3$.  Theorem~\ref{thm:main} gives
  \[
  \sqrt{\frac{28}{27}}\le c_3\le\sqrt2,
  \]
  and it is not known whether the Krause example is extremal.

\item Determine the smallest admissible truncation length in
  \eqref{eq:main} for $n\ge5$.  Proposition~\ref{prop:two-terms} shows that
  at least two terms are necessary for every $n\ge3$ and that the value
  $m_n=2$ is optimal for $n=3,4$; it remains open whether
  $m_n=\lfloor(n+1)/2\rfloor$ is optimal in higher dimensions.

\item Determine the exact values of $c_n$ for $n\ge4$, and decide whether
  \eqref{eq:main-op} is sharp for any $n\ge3$.

\item Characterize equality in \eqref{eq:main} beyond the commuting
  rank-one examples, and determine conditions guaranteeing strict inequality.
\end{enumerate}


\begin{thebibliography}{99}

\bibitem{Bhatia1997}
R.~Bhatia,
\emph{Matrix Analysis},
Graduate Texts in Mathematics 169, Springer, New York, 1997.

\bibitem{Bhatia2007Book}
R.~Bhatia,
\emph{Perturbation Bounds for Matrix Eigenvalues},
Classics in Applied Mathematics 53, SIAM, Philadelphia, 2007.

\bibitem{Bhatia2007History}
R.~Bhatia,
Spectral variation, normal matrices, and Finsler geometry,
\emph{Math. Intelligencer} \textbf{29} (2007), 41--46.

\bibitem{BhatiaDavisKoosis1989}
R.~Bhatia, C.~Davis, and P.~Koosis,
An extremal problem in Fourier analysis with applications to operator theory,
\emph{J. Funct. Anal.} \textbf{82} (1989), 138--150.

\bibitem{BhatiaDavisMcIntosh1983}
R.~Bhatia, C.~Davis, and A.~McIntosh,
Perturbation of spectral subspaces and solution of linear operator equations,
\emph{Linear Algebra Appl.} \textbf{52/53} (1983), 45--67.

\bibitem{BhatiaHolbrook1987}
R.~Bhatia and J.~A.~R.~Holbrook,
Unitary invariance and spectral variation,
\emph{Linear Algebra Appl.} \textbf{95} (1987), 43--68.

\bibitem{BondarenkoEtAl2026}
A.~Bondarenko, J.~Ortega-Cerd\`a, D.~Radchenko, and K.~Seip,
The H\"ormander--Bernhardsson extremal function,
to appear in \emph{Acta Math.}, arXiv:2504.05205v2, 2026.

\bibitem{Hall1935}
P.~Hall,
On representatives of subsets,
\emph{J. London Math. Soc.} \textbf{10} (1935), 26--30.

\bibitem{HoffmanWielandt1953}
A.~J.~Hoffman and H.~W.~Wielandt,
The variation of the spectrum of a normal matrix,
\emph{Duke Math. J.} \textbf{20} (1953), 37--39.

\bibitem{Holbrook1992}
J.~A.~R.~Holbrook,
Spectral variation of normal matrices,
\emph{Linear Algebra Appl.} \textbf{174} (1992), 131--144.

\bibitem{HormanderBernhardsson1993}
L.~H\"ormander and B.~Bernhardsson,
An extension of Bohr's inequality,
in J.-L.~Lions and C.~Baiocchi (eds.),
\emph{Boundary Value Problems for Partial Differential Equations and
Applications}, RMA Res. Notes Appl. Math. 29, Masson, Paris, 1993,
179--194.

\bibitem{Mirsky1960}
L.~Mirsky,
Symmetric gauge functions and unitarily invariant norms,
\emph{Quart. J. Math. Oxford Ser. (2)} \textbf{11} (1960), 50--59.



\end{thebibliography}
\end{document}